\theoremstyle{plain}
\newtheorem{theorem}{Theorem}
\newtheorem{corollary}[theorem]{Corollary}
\newtheorem{lemma}[theorem]{Lemma}
\newtheorem{proposition}[theorem]{Proposition}
\theoremstyle{remark}
\begin{document}

\date{Received ........}

\title{On a particular integral operator.}

\author{Epaminondas Diamantopoulos}

\address{57 Ag. Eleftheriou, 67100\\
Xanthi\\ Greece}

\email{epdiamantopoulos@yahoo.gr}


\subjclass[2000]{Primary 31B30, 31C25, 47B38; Secondary 47A30}

\keywords{Integral operator, Weighted composition operator}

\newcommand{\Ds}{\mathbb D}
\newcommand{\norm}[1]{\left\Vert#1\right\Vert}
\newcommand{\abs}[1]{\left\vert#1\right\vert}
\newcommand{\set}[1]{\left\{#1\right\}}
\newcommand{\Real}{\mathbb R}
\newcommand{\eps}{\varepsilon}
\newcommand{\To}{\longrightarrow}
\newcommand{\BX}{\mathbf{B}(X)}
\newcommand{\A}{\mathcal{A}}
\newcommand{\C}{\mathbb C}
\newcommand{\D}{\mathbb D}

\begin{abstract}
We consider an integral operator $\mathcal{I}$, special instances of which was studied in various contexts. Using an appropriate
transformation we write this operator in terms of weighted composition operators. Then, we provide a boundedness criterion on weighted Dirichlet spaces, and we apply this result in order to prove that certain integral operators are bounded on that spaces, unifying this way and extending previous results.
\end{abstract}
\maketitle


\section{Introduction.}
Let $\phi_1$, $\phi_2$, be linear fractional self maps of the unit disc with real coefficients. The linear segment
$S_z=[\phi_1(z), \phi_2(z)]$ is a subset of $\mathbb{D}$ for any $z\in \mathbb{D}$. In this article we consider the integral operator
\begin{equation}\label{int}
\mathcal{I}(f)(z)=\frac{1}{[S_z]}\int_{S_z} \frac{f(\zeta)}{p(z)\zeta +q(z)}\,d\zeta,
\end{equation}
where $p$ and $q$ are meromorphic functions in the unit disc and $[S_z]=\phi_2(z)-\phi_1(z)$, $z\in \mathbb{D}$. Well known instances of the operator $\mathcal{I}$ is the Ces\`{a}ro integral operator
\[
\mathcal{C}(f)(z)=\frac{1}{z}\int_0^z
\frac{f(\zeta)}{1-\zeta}\,d\zeta,
\]
that appears for $\phi_1(z)=0$, $\phi_2(z)=z$, $p(z)=-1$, $q(z)=1$, and the Hilbert integral operator
\[
{\mathcal{H}}(f)(z)=\int_{0}^1\frac{f(\zeta)}{1-\zeta z}\,d\zeta,
\]
that appears for $\phi_1(z)=0$, $\phi_2(z)=1$, $p(z)=-z$ and $q(z)=1$. We study the operator $\mathcal{I}$ on the weighted Dirichlet spaces $\mathcal{D}_{\alpha}$, $0<\alpha<2$, consisting of analytic functions $f$ in the unit disc for which
\[
\|f\|^2_{\mathcal{D}_{\alpha}} = |f(0)|^2+\iint_{\mathbb{D}} |f^{'}(z)|^2(1-|z|)^\alpha\,d m(z).
\]
It can be shown that, for $-1<\alpha<\infty$, $f\in \mathcal{D}_{\alpha}$, if and only if
\[
\sum_{n=1}^\infty n^{1-\alpha}|a_n|^2 <\infty.
\]
From the last expression it comes out that this chain of spaces contains the Hardy space $H^2$, for $\alpha=1$, and the classical Dirichlet space $\mathcal{D}$, for $\alpha=0$.

The purpose of this article is twofold. First, we aim to introduce the operator $\mathcal{I}$ and show that it may be expressed in terms of weighted composition operators, after an appropriate transformation. Our second goal is to exploit the expression of $\mathcal{I}$ in terms of weighted composition operators in order to provide a sufficient boundedness condition  on $\mathcal{D}_{\alpha}$ spaces, $0<\alpha<2$.

The rest of the article is as follows. First, at section 2 we show that under a suitable condition, $\mathcal{I}$ is a well defined operator on every space where a certain growth condition is assumed for every function in that space. In particular, the operator is shown to be well defined when acts on weighted Dirichlet spaces $\mathcal{D}_\alpha$, $0<\alpha<2$, of analytic functions in the unit disc. In section 3 we present various cases of interest where $\mathcal{I}$ naturally appears. In section 4 we provide the promised transformation of $\mathcal{I}$ in terms of a weighted composition operator. In section 5 we prove a rather general sufficient condition for $\mathcal{I}$ to be a bounded operator on $\mathcal{D}_{\alpha}$ spaces, $0<\alpha<2$. Further, in section 6, we specialize this result for simple linear $\phi_1$, $\phi_2$, $p$ and $q$, obtaining this way a convenient condition that is directly applicable for the operators that motivated this study. Finally, in the last section, we provide a procedure that generates well defined instances of the operator $\mathcal{I}$ of arbitrary complexity.

\section{Preliminaries.}
In the sequel, $\mathcal{X}$ will denote a Banach space of analytic functions such that for every function $f\in \mathcal{X}$, and every $z\in \mathbb{D}$, there is a constant $c=c(\mathcal{X})<1$, such that
\begin{align}\label{condition_for_f}
|f(z)| \leq \frac{1}{(1-|z|)^c}\|f\|_{X}.
\end{align}
Examples of a space of the above type is the Hardy space $H^p$, $p>1$, ($c=1/p$, \cite{Du}), the Bergman space $A^p$, $p>2$, ($c=2/p$, \cite{Vu}), and the weighted Dirichlet spaces $\mathcal{D}_{\alpha}$, $0<\alpha<2$, ($c=\alpha/2$, \cite{Ga}).
\begin{lemma} \label{lem : well_defined_integral}
Let $\phi_1$, $\phi_2$, be linear fractional self maps of the unit disc with real coefficients and $p$, $q$ meromorphic functions in the unit disc. If for every $z\in \mathbb{D}$,
\begin{equation}\label{assumption1}
\Re\left[\frac{p(z)\phi_2(z)+q(z)}{p(z)\phi_1(z)+q(z)}\right]^{1/2} >0,
\end{equation}
then, the operator $\mathcal{I}$ is well defined on the space $\mathcal{X}$.
\end{lemma}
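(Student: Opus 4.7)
Parametrize the segment $S_z$ by $\zeta(t)=\phi_1(z)+t(\phi_2(z)-\phi_1(z))$ for $t\in[0,1]$, so that $p(z)\zeta(t)+q(z)=(1-t)A+tB$ with $A=p(z)\phi_1(z)+q(z)$ and $B=p(z)\phi_2(z)+q(z)$. The plan is to split the proof into two essentially independent parts: (a) the affine denominator has no zero on $[0,1]$, so that the integrand is holomorphic in $\zeta$ along the path; (b) the integrand is absolutely integrable along $S_z$, even when an endpoint meets $\partial\mathbb{D}$.

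For part (a), a direct inspection of $(1-t)A+tB$ shows that it vanishes for some $t\in[0,1]$ precisely when $A=0$, $B=0$, or the quotient $B/A$ is a strictly negative real number. Since the principal branch $w\mapsto w^{1/2}$ sends $\mathbb{C}\setminus(-\infty,0]$ onto the open right half-plane, the hypothesis $\Re[(B/A)^{1/2}]>0$ simultaneously forces $A\neq 0$, $B\neq 0$, and $B/A\notin(-\infty,0]$. Hence $|p(z)\zeta+q(z)|$ admits a strictly positive lower bound $\delta(z)$ on $S_z$.

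For part (b), applying the growth condition \eqref{condition_for_f} for $f\in\mathcal{X}$ bounds the integrand in modulus by a constant multiple of $\|f\|_{\mathcal{X}}\,\delta(z)^{-1}(1-|\zeta|)^{-c}$ with $c<1$. Along the segment, $1-|\zeta(t)|$ can vanish only at an endpoint where $\phi_1(z)$ or $\phi_2(z)$ lies on $\partial\mathbb{D}$, and because these maps are linear fractional, it does so with at worst a linear rate in $t$ or $1-t$. The integral therefore reduces to one of the form $\int_0^1 s^{-c}\,ds$, which is finite since $c<1$. Combining (a) and (b), $\mathcal{I}(f)(z)$ is given by an absolutely convergent integral and is therefore well defined.

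The main obstacle I anticipate is the case in which $S_z$ meets $\partial\mathbb{D}$, as happens for the Hilbert operator where $\phi_2\equiv 1$. At such a boundary endpoint the nonvanishing of $p(z)\zeta+q(z)$ and the integrability of $f$ against $d\zeta$ must be controlled simultaneously; the form of hypothesis \eqref{assumption1}, together with the exponent $c<1$ in \eqref{condition_for_f}, is engineered to handle exactly this situation.
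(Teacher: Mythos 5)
Your argument is correct and follows essentially the same route as the paper: the same affine parametrization of $S_z$, the same deduction that hypothesis \eqref{assumption1} excludes a zero of the affine denominator $(1-t)A+tB$ on $[0,1]$ (your reformulation via $A\neq0$, $B\neq0$, $B/A\notin(-\infty,0]$ is in fact a cleaner rendering of the paper's algebraic manipulation), and the same use of the growth bound \eqref{condition_for_f} with $c<1$ to integrate $(1-|\zeta(t)|)^{-c}$ near a boundary endpoint. The only item the paper adds is a brief remark on clearing poles when $p$, $q$ are merely meromorphic, which does not change the substance.
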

\begin{proof}
First assume that $p$ and $q$ are analytic functions in $\mathbb{D}$. Let $r_z(t)=[S_z]t+\phi_1(z)$, $0<t<1$, $z\in \mathbb{D}$. For $f\in \mathcal{X}$, and $z\in \mathbb{D}$,
\begin{align}
|\mathcal{I}(f)(z)|&= \left|\frac{1}{[S_z]}\int_{S_z}\frac{f(\zeta)}{p(z)\zeta+q(z)}\,d\zeta\right|\notag \\
&= \left|\int_0^1\frac{f(r_z(t))}{p(z)r_z(t) +q(z)}\,dt\right|\notag \\
&\leq \int_0^1 \frac{|f(r_z(t))|}{|p(z)r_z(t) +q(z)|} \,dt\notag \\
&\leq \max_{t\in[0,1]}\frac{C}{|p(z)r_z(t) +q(z)|}  \int_0^1 \frac{1}{(1-|r_z(t)|)^{c}}\,dt \|f\|_{\mathcal{X}}.\notag
\end{align}
Now, from the assumption (\ref{assumption1}), after a square power operation, followed by an abstraction of 1, a multiplication by -1 and an inversion, we finish at
\[
-\frac{p(z)\phi_1(z)+q(z)}{p(z)[S_z]}  \notin [0,1],
\]
which in turn implies that the function $p(z)r_z(t)+q(z)$, is a non vanishing function of $t\in [0,1]$, or equivalently
\[
\max_{t\in[0,1]}\frac{1}{|p(z)r_z(t)+q(z)|} < \infty,\quad z\in \mathbb{D}.
\]
Now, since $\phi_i$, $i=1$, $2$, are linear fractional transformations with real coefficients, we get that $\phi_i(\mathbb{D})$, $i=1$, $2$, are symmetrical to the real axis discs, while since $\phi_i$, $i=1$, $2$, are self maps of the unit disc, we get that $-1\leq \phi(\pm1) \leq1$. For any $z\in \mathbb{D}$, and $0<t<1$, we estimate,
\[
|r_z(t)|\leq \min\{[S_{\pm1}]t+\phi_1(-1),[S_{\pm1}]t+\phi_1(1) \},
\]
thus,
\[
1-|r_z(t)|\geq  \max\{1-\phi_1(-1)-[S_{\pm1}]t,1-\phi_1(1)-[S_{\pm1}]t\},
\]
and we get
\[
\int_0^1 \frac{1}{(1-|r_z(t)|)^{c}}\,dt  <\infty,
\]
since $c<1$. The above arguments implies that the operator $\mathcal{I}$ is well defined for any $z\in \mathbb{D}$, and every $f\in \mathcal{X}$. Finally, notice that the assumption of analyticity of $p$ and $q$ may be relaxed. In fact, if either $p$ or $q$ is meromorphic in the unit disc, then a multiplication of both terms of the fraction $f(\zeta)/(p(z)\zeta+q(z))$, with an appropriate polynomial will eliminate their poles, making the above argument valid.
\end{proof}

\section{Major Cases of Interest.}

\subsection{The operator $\mathcal{I}$ as a unification of other integral operators.}
Beside the Ces\`{a}ro and the Hilbert integral operators, the operator $\mathcal{I}$ is a prototype for several other previously studied operators, like the operator $\mathcal{A}$, which is the $H^2$ adjoint operator of the Ces\`{a}ro operator, or the operator $\mathcal{H}_0$, which is the operator that induced by the reduced Hilbert matrix. At Table 1, we provide the choices of $\phi_1$, $\phi_2$, $p$ and $q$ that corresponds at each case, along with some representative articles.
\begin{table}[ht]
\renewcommand{\arraystretch}{1.75}
\caption{Instances of the operator $\mathcal{I}$. }
 \begin{tabular}{lccccccc}
 \hline\hline
\qquad
\text{ Operator } & $\phi_1(z)$ & $\phi_2(z)$ & $p(z)$ & $q(z)$  & Articles \\
\hline
${\mathcal{C}(f)(z)=\frac{1}{z}\int_0^z \frac{f(\zeta)}{1-\zeta}\,d\zeta}$ & $0$ & $z$ & $-1$ & $1$ & \cite{Ga}, \cite{Sis4} \\
${\mathcal{A}(f)(z)=\frac{1}{z-1}\int_1^z f(\zeta)\,d\zeta}$ & $1$ & $z$ & $0$ & $1$ & \cite{Sis2}, \cite{Sis3}\\
${\mathcal{J}(f)(z)=\frac{1}{z-1}\int_1^z \frac{f(\zeta)}{-1-\zeta}\,d\zeta}$  & $1$ & $z$ & $-1$ & $-1$ & \cite{Sis1}\\
${{\mathcal{H}}(f)(z)=\int_{0}^1\frac{f(\zeta)}{1-\zeta z}\,d\zeta}$ & $0$ & $1$ & $-z$ & $1$ & \cite{DS},  \cite{Li}\\
${{\mathcal{H}}_0(f)(z)=\frac{1}{2}\int_{-1}^1\frac{f(\zeta)}{1-\zeta z}\,d\zeta}$ & $-1$ & $1$ & $-z$ & $1$ & \cite{Dia2}\\
\hline
    \end{tabular}
\end{table}

\subsection{Instances of the operator $\mathcal{I}$ emerging as operators induced by matrices.}

The operators $\mathcal{C}$ and ${\mathcal{H}}$ that motivated the study of the integral operator $\mathcal{I}$ are operators induced by the action of particular matrices on coefficients of analytic functions. Naturally, it turns out that we may consider these operators as members of a more general family of matrix - induced operators. In particular, let
\[
M_1 = \left(
  \begin{array}{cccc}
    c_{0,0} & c_{0,1} & c_{0,2} & \ldots \\
    c_{1,0} & c_{1,1} & c_{1,2} & \ldots \\
    c_{2,0} & c_{2,1} & c_{2,2} & \ldots \\
    \vdots & \vdots & \vdots & \ddots \\
  \end{array}
\right),
\]
where
\[
c_{n,k} = (-1)^n\frac{p_0^n}{q_0^{n+1}} \frac{x_2^{n+k+1}-x_1^{n+k+1}}{(x_2-x_1)(n+k+1)},\quad n,k\geq 0,
\]
$-1\leq x_1 < x_2 \leq 1$, $p_0$, $q_0\in \mathbb{R}$, $(q_0 \pm p_0x_2)(q_0 \pm p_0x_1)>0$, and
\[
M_2 = \left(
  \begin{array}{cccc}
    d_{0,0} & 0 & 0 & \ldots \\
    d_{1,0} & d_{1,1} & 0 & \ldots \\
    d_{2,0} & d_{2,1} & d_{2,2} & \ldots \\
    \vdots & \vdots & \vdots & \ddots \\
  \end{array}
\right),
\]
where, $d_{n,k} = 0$, for $0\leq n<k$, and
\[
d_{n,k} = \frac{(-1)^{n-k}}{q_0}\left(\frac{p_0}{q_0}\right)^{n-k} \frac{\lambda_2^{n+1}-\lambda_1^{n+1}}{(\lambda_2-\lambda_1)(n+1)},\quad n\geq k,
\]
where $p_0$, $q_0 \in \mathbb{R}$, $-1\leq \lambda_1 < \lambda_2 \leq 1$ and $(q_0\pm p_0\lambda_1)(q_0 \pm p_0\lambda_2)>0$.

For $p_0 =  - q_0$, the matrices of the family $M_1$ are Hankel matrices while they may be considered as a generalization of the Hilbert matrix, a case that appears for the choice $p_0 = -1$, $q_0 =1$, $x_1=0$, $x_2=1$. The reader may also verify that the reduced Hilbert matrix appears for the  choice $p_0 = -1$, $q_0 =1$, $x_1=-1$, $x_2=1$. On the other hand, the matrices of the family $M_2$ are lower triangular matrices that they may be considered as a generalization of Ces\`{a}ro matrix which appears for $p_0 = -1$, $q_0 =1$, $\lambda_1=0$, $\lambda_2=1$. For an analytic function $f(z)=\sum_{n=0}^{\infty}a_nz^n \in \mathcal{X}$, let
\[
{\mathcal{M}_1} : \sum_{n=0}^{\infty}a_nz^n \to \sum_{n=0}^{\infty}\sum_{k=0}^{\infty} a_k c_{n,k}  z^n,
\]
and
\[
{\mathcal{M}_2} : \sum_{n=0}^{\infty}a_nz^n \to \sum_{n=0}^{\infty}\sum_{k=0}^{n} a_kd_{n,k}  z^n.
\]
Now, assume that for any $f(z) = \sum_{n\geq} a_n z^n$, the above infinite sums converges and define analytic functions in the unit disc for every $f\in \mathcal{X}$. Then,
\begin{align}
\mathcal{M}_1(f)(z)&= \sum_{n=0}^{\infty}\sum_{k=0}^{\infty} a_k(-1)^n\frac{p_0^n}{q_0^{n+1}} \frac{x_2^{n+k+1}-x_1^{n+k+1}}{(x_2-x_1)(n+k+1)}  z^n \notag \\ &=\sum_{n=0}^\infty \left( \sum_{k=0}^\infty a_k (-1)^n \frac{p_0^n}{q_0^{n+1}} \frac{1}{x_2-x_1}\int_{x_1}^{x_2} \zeta^{n+k} \,d\zeta \right) z^n \notag \\ &= \frac{1}{x_2 - x_1} \int_{x_1}^{x_2} \frac{f(\zeta)}{p_0 z \zeta+ q_0} \,d\zeta, \notag
\end{align}
and
\begin{align}
\mathcal{M}_2(f)(z)&= \sum_{n=0}^{\infty}\sum_{k=0}^n  a_k \frac{(-1)^{n-k}}{q_0}\left(\frac{p_0}{q_0}\right)^{n-k} \frac{\lambda_2^{n+1}-\lambda_1^{n+1}}{(\lambda_2-\lambda_1)(n+1)} z^n \notag \\ &=\sum_{n=0}^\infty \left( \sum_{k=0}^n a_k \frac{(-1)^{n-k}}{q_0}\left(\frac{p_0}{q_0}\right)^{n-k} \frac{1}{\lambda_2-\lambda_1}\int_{\lambda_1 z}^{\lambda_2 z} \zeta^{n} \,d\zeta \right) z^n \notag \\ &= \frac{1}{(\lambda_2 - \lambda_1)z} \int_{\lambda_1 z}^{\lambda_2 z} \frac{f(\zeta)}{p_0\zeta +q_0} \,d\zeta. \notag
\end{align}
Notice that the interchange of sum and integral is justified since by the assumed conditions on $p_0$, $q_0$, $x_i$, $\lambda_i$, $i=1$, $2$, and Lemma \ref{lem : well_defined_integral}, the integrals are well defined in both cases and we arrive to our stated goal that the operators $\mathcal{M}_1$ and $\mathcal{M}_2$ are special cases of the operator $\mathcal{I}$.

\section{The operator $\mathcal{I}$ in terms of weighted composition operators.}
We remind that $\mathcal{X}$ is a space of analytic functions such that the condition (\ref{condition_for_f}) holds. For any $(t,z) \in (0,1)\times \mathbb{D}$, let
\[
w(t,z) = \frac{1}{(\phi_2(z)-t[S_z])p(z)+q(z)},
\]
and
\begin{align}
\gamma(t,z)&=\frac{\phi_1(z)\phi_2(z)p(z)+(\phi_1(z)+t[S_z])q(z)}{(\phi_2(z)-t[S_z])p(z)+q(z)}. \notag
\end{align}
Notice that in general the function $\gamma$ is a meromorphic function on the unit disc.
\begin{proposition}\label{prop: represent}
Let $\phi_1$, $\phi_2$, be linear fractional self maps of the unit disc with real coefficients, $p$, $q$ meromorphic in $\mathbb{D}$, such that
\[
\Re \left[\frac{p(z)\phi_2(z)+q(z)}{p(z)\phi_1(z)+q(z)}\right]^{1/2} >0,
\]
and for any $(t,z) \in (0,1)\times \mathbb{D}$,
\begin{equation}\label{eq : gamma self map}
|\phi_1(z)\phi_2(z)p(z)+(\phi_1(z)+t[S_z])q(z)|< |(\phi_2(z)-t[S_z])p(z)+q(z)|.
\end{equation}
Then, for any $f \in \mathcal{X}$,
\[
\mathcal{I}(f)(z) = \int_0^1 T_t(f)(z) \,dt,
\]
where
\[
T_t(f)(z)=w(t,z)f(\gamma(t,z)).
\]
\end{proposition}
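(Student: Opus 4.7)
The strategy is a change of variables $\zeta=\gamma(t,z)$ in the contour integral defining $\mathcal{I}(f)(z)$. Fix $z\in\mathbb{D}$. Then $t\mapsto\gamma(t,z)$ is itself a M\"obius transformation in the real variable $t$, sending $0,1,\infty$ respectively to $\phi_1(z),\phi_2(z),-q(z)/p(z)$. It therefore maps $[0,1]$ to an arc $\Gamma_z$ joining $\phi_1(z)$ to $\phi_2(z)$; hypothesis (\ref{eq : gamma self map}) is precisely the assertion $|\gamma(t,z)|<1$ for all $t\in[0,1]$, so $\Gamma_z\subset\mathbb{D}$.

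The computational heart of the proof is a telescoping of two explicit formulas. A short differentiation gives
\[
\partial_t\gamma(t,z)=\frac{[S_z]\,(p(z)\phi_1(z)+q(z))(p(z)\phi_2(z)+q(z))}{[(\phi_2(z)-t[S_z])p(z)+q(z)]^{2}},
\]
and a direct algebraic simplification yields
\[
p(z)\gamma(t,z)+q(z)=\frac{(p(z)\phi_1(z)+q(z))(p(z)\phi_2(z)+q(z))}{(\phi_2(z)-t[S_z])p(z)+q(z)}.
\]
Dividing collapses the common factor $(p(z)\phi_1(z)+q(z))(p(z)\phi_2(z)+q(z))$ and one power of the denominator, leaving $\partial_t\gamma(t,z)/[p(z)\gamma(t,z)+q(z)]=[S_z]\,w(t,z)$, so that along the path $\zeta=\gamma(t,z)$ we have $d\zeta/[p(z)\zeta+q(z)]=[S_z]\,w(t,z)\,dt$.

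It remains to deform the straight segment $S_z$ to the arc $\Gamma_z$ in the integral (\ref{int}). The integrand $F(\zeta)=f(\zeta)/[p(z)\zeta+q(z)]$ has at most one simple pole, at $\zeta_0=-q(z)/p(z)$; when $p(z)=0$, $\gamma$ degenerates to the straight parametrization $\phi_1(z)+t[S_z]$ and the identity is immediate. Otherwise, Lemma \ref{lem : well_defined_integral} excludes $\zeta_0$ from $S_z$, and the second displayed formula above (whose numerator is nonzero by the square-root hypothesis) excludes it from $\Gamma_z$. Since $\zeta_0=\gamma(\infty,z)$, it lies on the M\"obius circle $\gamma(\mathbb{R}\cup\{\infty\},z)$ but on the complementary arc to $\Gamma_z$; as $S_z$ is a chord of this circle and $\Gamma_z$ an arc on it, the region they enclose sits inside the disc bounded by that circle and does not contain $\zeta_0$. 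Cauchy's theorem then permits the deformation, and substituting the previous identity gives $\mathcal{I}(f)(z)=\int_0^1 w(t,z)f(\gamma(t,z))\,dt$ as claimed.

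The main obstacle is this final deformation step; the observation $\zeta_0=\gamma(\infty,z)$ reduces it to a clean geometric statement about the image of $\mathbb{R}\cup\{\infty\}$ under the M\"obius parametrization, but one must treat $p(z)=0$ separately and double-check the simple-connectedness of a suitable open neighborhood of the closed lens bounded by $\Gamma_z\cup S_z$.
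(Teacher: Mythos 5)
Your proof is correct and follows essentially the same route as the paper: the identity rests on the same two formulas for $p(z)\gamma(t,z)+q(z)$ and $\partial_t\gamma(t,z)$, whose quotient collapses to $[S_z]\,w(t,z)$, after which one substitutes into (\ref{int}). The only difference is that you explicitly justify the deformation of the straight segment $S_z$ onto the circular arc $\gamma([0,1],z)$ via Cauchy's theorem (handling $p(z)=0$ and locating the pole $-q(z)/p(z)=\gamma(\infty,z)$ on the complementary arc), a step the paper performs silently as the ``transformation $\zeta\to\gamma(t,z)$''; this added care is sound and strengthens the argument rather than changing it.
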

\begin{proof}
From the first assumption and Lemma \ref{lem : well_defined_integral} the operator $\mathcal{I}$ is well defined on the space $\mathcal{X}$, while from the second assumption the function $\gamma$ is a well defined self map of the unit disc. We easily verify that $\gamma(0,z)=\phi_1(z)$,  and
$\gamma(1,z)=\phi_2(z)$. At the integral (\ref{int}) we apply the transformation
$\zeta \to \gamma(t,z)$, and we compute,
\begin{align}
\mathcal{I}(f)(z) &=\frac{1}{[S_z]}\int_0^1\frac{f(\gamma(t,z))}{p(z)\gamma(t,z)+q(z)}\frac{\partial\gamma(t,z)}{\partial t}\,dt\notag.
\end{align}
Now, a calculation shows that
\begin{align}
p(z)\gamma(t,z)+q(z)=\frac{[p(z)\phi_1(z)+q(z)][p(z)\phi_2(z)+q(z)]}{p(z)(\phi_2(z)-t[S_z])+q(z)}, \notag
\end{align}
while
\begin{align}
\frac{\partial\gamma(t,z)}{\partial t} &=\frac{[S_z] [p(z)\phi_1(z)+q(z)][p(z)\phi_2(z)+q(z)]}{[p(z)(\phi_2(z)-t[S_z])+q(z)]^2}.  \notag
\end{align}
A substitution gives
\begin{align}
\mathcal{I}(f)(z)&=  \int_0^1 \frac{f(\gamma(t,z))}{p(z)(\phi_2(z)-t[S_z])+q(z)}\,dt \notag \\
&=\int_0^1 w(t,z)f(\gamma(t,z)) \,dt \notag ,
\end{align}
which is the desired result.
\end{proof}
\textbf{Remark :} The assumption (\ref{eq : gamma self map}) is necessary in order to verify that the transformation $\zeta \to \gamma(t,z)$ is a well defined operation on analytic functions. An interesting function theoretic question that arises is whether this assumption may be replaced by a simpler and easily applicable alternative. The author provides an equivalent condition in the last remarks, which is non practical, though the only one available at the moment.

\section{Weighted Dirichlet space norm estimate of the operator $\mathcal{I}$.}
Hereafter, we focus in the case $\mathcal{X} = \mathcal{D}_\alpha$, $0<\alpha<2$. We also remind that for any self map $\omega$ of the unit disc, Schwarz's-Pick inequality is at our disposal
\[
\frac{1-|z|}{1-|\omega(z)|}\leq \frac{1}{|\omega^{'}(z)|}, \quad z \in \mathbb{D}.
\]
\begin{lemma}\label{lem:lemma1}
Let $f\in \mathcal{D}_{\alpha}$,  $0<\alpha<2$, and for any $(t,z) \in (0,1)\times \mathbb{D}$,
\[
|\phi_1(z)\phi_2(z)p(z)+(\phi_1(z)+t[S_z])q(z)|< |(\phi_2(z)-t[S_z])p(z)+q(z)|.
\]
Then, for $0<\alpha<2$,
\[
\|T_t(f)\|^2_{\mathcal{D}_{\alpha}} \leq C\left[ \int_\mathbb{D} \frac{|\partial_z w(t,z)|^2}{|\partial_z \gamma(t,z)|^{\alpha}}  \,dm(z)+\sup_{z\in\mathbb{D}}\frac{|w(t,z)|^2}{|\partial_z\gamma(t,z)|^\alpha}\right]\|f\|^2_{\mathcal{D}_{\alpha}},
\]
for an appropriate constant $C$ independent of $t$.
\end{lemma}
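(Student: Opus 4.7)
The plan is to expand $\|T_t(f)\|^2_{\mathcal{D}_\alpha}$ from its definition, apply the Leibniz rule
\[
(T_t f)'(z) = \partial_z w(t,z)\, f(\gamma(t,z)) + w(t,z)\, f'(\gamma(t,z))\, \partial_z \gamma(t,z),
\]
and then estimate the two pieces separately using the pointwise growth bound (\ref{condition_for_f}) with $c = \alpha/2$ together with the Schwarz--Pick inequality recalled at the start of the section. After $|A+B|^2 \leq 2(|A|^2 + |B|^2)$, the problem reduces to estimating
\[
I_1 = \iint_{\mathbb{D}} |\partial_z w|^2 |f(\gamma)|^2 (1-|z|)^\alpha \, dm(z), \qquad I_2 = \iint_{\mathbb{D}} |w|^2 |f'(\gamma)|^2 |\partial_z \gamma|^2 (1-|z|)^\alpha \, dm(z).
\]

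For $I_1$, I will use (\ref{condition_for_f}) in the form $|f(\gamma(t,z))|^2 \leq \|f\|^2_{\mathcal{D}_\alpha}/(1-|\gamma(t,z)|)^\alpha$ and then Schwarz--Pick in the form $(1-|z|)^\alpha/(1-|\gamma(t,z)|)^\alpha \leq |\partial_z \gamma(t,z)|^{-\alpha}$; this turns $I_1$ directly into $\|f\|^2_{\mathcal{D}_\alpha}\iint |\partial_z w|^2 |\partial_z \gamma|^{-\alpha}\,dm$, which is the first term on the right-hand side of the lemma. For $I_2$, the same Schwarz--Pick identity replaces $(1-|z|)^\alpha$ by $(1-|\gamma|)^\alpha/|\partial_z \gamma|^\alpha$, so that
\[
I_2 \leq \sup_{z\in\mathbb{D}}\frac{|w(t,z)|^2}{|\partial_z \gamma(t,z)|^\alpha}\,\iint_{\mathbb{D}} |f'(\gamma(z))|^2 |\partial_z \gamma(z)|^2 (1-|\gamma(z)|)^\alpha \, dm(z).
\]

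The remaining integral I propose to control through a (non-injective) change of variables $\zeta = \gamma(t,z)$: since $\gamma(t,\cdot)$ is rational in $z$ for each fixed $t$ (built from the linear fractional $\phi_1,\phi_2$ and the meromorphic $p,q$), its valence can be bounded uniformly in $t \in (0,1)$, and the induced Nevanlinna counting function $N_{\gamma(t,\cdot)}$ is therefore uniformly bounded on $\mathbb{D}$. This yields $\iint |f'(\gamma)|^2 |\partial_z \gamma|^2 (1-|\gamma|)^\alpha \, dm(z) \leq C\|f\|^2_{\mathcal{D}_\alpha}$ with $C$ independent of $t$, producing exactly the second term of the stated bound. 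Finally, the point-evaluation contribution $|T_t f(0)|^2 = |w(t,0)|^2 |f(\gamma(t,0))|^2$ is absorbed into the supremum term by one more invocation of (\ref{condition_for_f}) and of Schwarz--Pick at the origin, which gives $(1-|\gamma(t,0)|)^{-\alpha} \leq 2^{\alpha}|\partial_z\gamma(t,0)|^{-\alpha}$.

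The step I expect to be the main obstacle is the control of $I_2$: one has to justify that the change-of-variables constant is independent of $t$, which forces a uniform bound on the valence of the family $\{\gamma(t,\cdot)\}_{t\in(0,1)}$ rather than a pointwise-in-$t$ argument. Once that is in hand, the rest of the proof is a routine but careful combination of Leibniz's rule, the growth condition (\ref{condition_for_f}), and Schwarz--Pick, with all absolute constants collected into the single constant $C$.
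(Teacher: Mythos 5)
Your proposal follows essentially the same route as the paper's own proof: the same Leibniz-rule splitting into $|T_tf(0)|^2+2I_1+2I_2$, the same use of the growth bound (\ref{condition_for_f}) with $c=\alpha/2$ together with Schwarz--Pick for $I_1$ and for the point-evaluation term, and the same factoring of $(1-|z|)^\alpha$ in $I_2$ so as to pull out the supremum $\sup_z |w|^2/|\partial_z\gamma|^\alpha$. The only point of divergence is the remaining integral in $I_2$: the paper simply asserts $\int_{\mathbb{D}}|f'(\gamma_t)|^2|\gamma_t'|^2(1-|\gamma_t|)^\alpha\,dm\leq\|f\|^2_{\mathcal{D}_\alpha}$ (tacitly a univalent change of variables), whereas you justify it by a non-injective change of variables with a uniform valence bound --- the more careful treatment, though be aware that in the lemma's stated generality $p$ and $q$ are only meromorphic, so $\gamma(t,\cdot)$ need not be rational and the uniform valence bound you invoke is not automatic there (it does hold in all the linear fractional cases the paper actually uses).
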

\begin{proof}
First, notice that from our first assumption, $\gamma$ is an analytic function on the unit disc. Further, for the proof we abbreviate $w_t(z)=w(t,z)$, and $\gamma_t(z)=\gamma(t,z)$, thus $\partial_z w(t,z) = w_t^{'}(z)$, and $\partial_z \gamma(t,z) = \gamma_t^{'}(z)$. Let $f\in \mathcal{D}_{\alpha}$, $0<\alpha<2$. We estimate
\begin{align}
\|T_t(f)\|^2_{\mathcal{D}_{\alpha}} &= |T_t(f)(0)|^2 +\int_\mathbb{D}|(w_t(z)f(\gamma_t(z)))^{'}|^2(1-|z|)^\alpha\,dm(z) \notag \\
&\leq |T_t(f)(0)|^2 +  2 \int_\mathbb{D}|w_t(z)^{'}|^2|f(\gamma_t(z))|^2(1-|z|)^\alpha\,dm(z) \notag \\
&\qquad \qquad \qquad +2 \int_\mathbb{D}|w_t(z)|^2|(f(\gamma_t(z)))^{'}|^2(1-|z|)^\alpha\,dm(z)& \notag  \\
& = |T_t(f)(0)|^2 + 2I_1 +2I_2. \notag
\end{align}
Now,
\begin{align}
I_1 & = \int_\mathbb{D}|w_t^{'}(z)|^2|f(\gamma_t(z))|^2(1-|z|)^\alpha\,dm(z) \notag \\
& \leq C \int_\mathbb{D}\frac{(1-|z|)^\alpha|w_t^{'}(z)|^2}{(1-|\gamma_t(z)|)^{\alpha}} \,dm(z)\|f\|^2_{\mathcal{D}_{\alpha}} \notag \\
& \leq C \int_\mathbb{D} \frac{|w_t^{'}(z)|^2}{|\gamma^{'}_t(z)|^{\alpha}}  \,dm(z)\|f\|^2_{\mathcal{D}_{\alpha}}, \notag
\end{align}
while for $I_2$ we estimate,
\begin{align}
I_2 &= \int_\mathbb{D}|w_t(z)|^2|f^{'}(\gamma_t(z))|^2|\gamma^{'}_t(z)|^2(1-|z|)^\alpha\,dm(z) \notag \\
&= \int_\mathbb{D}|w_t(z)|^2|f^{'}(\gamma_t(z))|^2|\gamma^{'}_t(z)|^2(1-|\gamma_t(z)|)^\alpha\frac{(1-|z|)^\alpha}{(1-|\gamma_t(z)|)^\alpha}\,dm(z) \notag \\
&\leq \sup_{z\in\mathbb{D}}\frac{|w_t(z)|^2}{|\gamma_t^{'}(z)|^\alpha} \int_\mathbb{D}|f^{'}(\gamma_t(z))|^2|\gamma^{'}_t(z)|^2(1-|\gamma_t(z)|)^\alpha\,dm(z)\notag \\
&\leq \sup_{z\in\mathbb{D}}\frac{|w_t(z)|^2}{|\gamma_t^{'}(z)|^\alpha} \|f\|^2_{\mathcal{D}_{\alpha}}. \notag
\end{align}

Finally,
\begin{align}
|T_t(f)(0)|^2 &= |w_t(0)|^2|f(\gamma_t(0))|^2 \leq \frac{C |w_t(0)|^2}{\left(1-|\gamma_t(0)|\right)^\alpha}\|f\|^2_{\mathcal{D}_{\alpha}} \notag \\
&\leq \frac{C |w_t(0)|^2}{|\gamma^{'}_t(0)|^{\alpha}}\|f\|^2_{\mathcal{D}_{\alpha}}\leq C\sup_{z\in\mathbb{D}}\frac{|w_t(z)|^2}{|\gamma_t^{'}(z)|^\alpha} \|f\|^2_{\mathcal{D}_{\alpha}}, \notag
\end{align}
and the reader may verify that the above estimates imply the desired result.
\end{proof}
From the Proposition \ref{prop: represent}, the Minkowski's inequality and the Lemma \ref{lem:lemma1} we get
\begin{theorem}\label{the: upper bound general}
Let $\phi_1$, $\phi_2$, be linear fractional self maps of the unit disc with real coefficients, $p$, $q$ meromorphic functions in $\mathbb{D}$, such that
\[
\Re \left[\frac{p(z)\phi_2(z)+q(z)}{p(z)\phi_1(z)+q(z)}\right]^{1/2} >0,
\]
and for any $(t,z) \in (0,1)\times \mathbb{D}$,
\[
|\phi_1(z)\phi_2(z)p(z)+(\phi_1(z)+t[S_z])q(z)|<|(\phi_2(z)-t[S_z])p(z)+q(z)|.
\]
Then, for any $f\in \mathcal{D}_{\alpha}$, $0<\alpha<2$,
\[
\|\mathcal{I}(f)\|_{\mathcal{D}_{\alpha}} \leq C\int_0^1 \left[ \int_\mathbb{D} \frac{|\partial_z w(t,z)|^2}{|\partial_z \gamma(t,z)|^{\alpha}}  \,dm(z)+\sup_{z\in\mathbb{D}}\frac{|w(t,z)|^2}{|\partial_z\gamma(t,z)|^\alpha}\right]^{1/2}\,dt\|f\|_{\mathcal{D}_{\alpha}}.
\]
\end{theorem}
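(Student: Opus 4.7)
The plan is to combine the representation formula from Proposition \ref{prop: represent} with the pointwise-in-$t$ estimate from Lemma \ref{lem:lemma1} via Minkowski's integral inequality, exactly as the sentence preceding the statement indicates.

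Step one: invoke Proposition \ref{prop: represent}, whose hypotheses are exactly the two assumptions in the theorem, to write
\[
\mathcal{I}(f)(z) = \int_0^1 T_t(f)(z)\,dt, \qquad f \in \mathcal{D}_\alpha,
\]
where $T_t(f)(z) = w(t,z) f(\gamma(t,z))$. Since $\mathcal{D}_\alpha$ is a Banach space of analytic functions on $\mathbb{D}$, and since the map $t \mapsto T_t(f)$ is continuous into $\mathcal{D}_\alpha$ (this is a mild regularity check using that $w$ and $\gamma$ depend smoothly on $t$ with $\gamma_t$ mapping into $\mathbb{D}$ uniformly, so the integrand of the Lemma's right-hand side is locally bounded in $t$), the Bochner integral in $\mathcal{D}_\alpha$ coincides with the pointwise integral above.

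Step two: apply Minkowski's integral inequality for the $\mathcal{D}_\alpha$-norm, which gives
\[
\|\mathcal{I}(f)\|_{\mathcal{D}_\alpha} = \left\| \int_0^1 T_t(f)\,dt \right\|_{\mathcal{D}_\alpha} \le \int_0^1 \|T_t(f)\|_{\mathcal{D}_\alpha}\,dt.
\]
Step three: insert the estimate of Lemma \ref{lem:lemma1}, which under the second hypothesis of the theorem gives
\[
\|T_t(f)\|_{\mathcal{D}_\alpha} \le C^{1/2}\left[ \int_\mathbb{D} \frac{|\partial_z w(t,z)|^2}{|\partial_z \gamma(t,z)|^{\alpha}}\,dm(z) + \sup_{z\in\mathbb{D}}\frac{|w(t,z)|^2}{|\partial_z\gamma(t,z)|^\alpha}\right]^{1/2} \|f\|_{\mathcal{D}_\alpha}
\]
after taking square roots (using $\sqrt{a+b} \le \sqrt{a}+\sqrt{b}$ is not needed; the Lemma already has the two terms summed inside one bracket). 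Absorbing $C^{1/2}$ into the constant $C$ and pulling the factor $\|f\|_{\mathcal{D}_\alpha}$ out of the $t$-integral yields exactly the stated inequality.

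I do not expect any genuine obstacle: the theorem is a straight-line corollary, and the only mildly delicate point is justifying the interchange between norm and integral in $t$, i.e.\ verifying that $T_\cdot(f)\colon (0,1) \to \mathcal{D}_\alpha$ is strongly measurable and Bochner integrable. This follows from the analyticity of $w(t,\cdot)$ and $\gamma(t,\cdot)$ in $z$, their joint continuity in $(t,z)$, and the fact that the Lemma's bound is finite for each fixed $t\in(0,1)$ under the standing hypotheses, so the scalar function $t \mapsto \|T_t(f)\|_{\mathcal{D}_\alpha}$ is measurable and its integrability is precisely what the theorem's conclusion asserts (conditional on the right-hand side being finite, which is the content of the sufficient condition).
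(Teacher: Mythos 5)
Your proposal is correct and follows exactly the route the paper intends: it derives the theorem by combining Proposition \ref{prop: represent}, Minkowski's integral inequality, and Lemma \ref{lem:lemma1}, which is precisely the (one-line) justification the paper gives. The additional remarks on strong measurability and Bochner integrability of $t \mapsto T_t(f)$ are a reasonable tightening of a point the paper leaves implicit, but they do not change the argument.
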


\section{Instances of the operator $\mathcal{I}$, corresponding to a linear fractional map $\gamma$.}\label{section : linear fractional}
Linear fractional maps are of special interest since in that cases the necessary assumption (\ref{eq : gamma self map}) is easily verified. Moreover, all the concrete examples of $\mathcal{I}$ that motivated our study corresponds to linear fractional $\gamma$ (see Table 2). Thus, a natural thing to do is to provide a more applicable boundedness condition for those cases.

\begin{table}[ht]
\renewcommand{\arraystretch}{1.75}
\caption{Examples of operators of the form $\mathcal{I}$ that motivated this study and corresponds to a linear fractional $\gamma$. }
 \begin{tabular}{lcc}
 \hline\hline
\qquad
\text{ Operator }  &$\left[\frac{p(z)\phi_2(z)+q(z)}{p(z)\phi_1(z)+q(z)}\right]^{1/2}$ & $\gamma(t,z)$ \\
\hline
${\mathcal{C}(f)(z)=\frac{1}{z}\int_0^z \frac{f(\zeta)}{1-\zeta}\,d\zeta}$  & $\sqrt{1-z}$  & $\frac{tz}{(t-1)z+1}$ \\
${\mathcal{A}(f)(z)=\frac{1}{z-1}\int_1^z f(\zeta)\,d\zeta}$ & $1$  & $tz+1-t$ \\
${\mathcal{J}(f)(z)=\frac{1}{z-1}\int_1^z \frac{f(\zeta)}{-1-\zeta}\,d\zeta}$  & $\sqrt{\frac{1+z}{2}}$  & $\frac{(-t-1)z+t-1}{(t-1)z-t-1}$ \\
${{\mathcal{H}}(f)(z)=\int_{0}^1\frac{f(\zeta)}{1-\zeta z}\,d\zeta}$ & $\sqrt{1-z}$ & $\frac{t}{(t-1)z+1}$  \\
${{\mathcal{H}}_0(f)(z)=\frac{1}{2}\int_{-1}^1\frac{f(\zeta)}{1-\zeta z}\,d\zeta}$  & $\sqrt{\frac{1-z}{1+z}}$ & $\frac{2z+4t-2}{(4t-2)z+2}$ \\
\hline
    \end{tabular}
\end{table}
For simplicity, we restrict ourselves to simple linear $\phi_1$, $\phi_2$, that is, we assume that $\phi_1(z)=x_1+\lambda_1z$ and $\phi_2(z)=x_2+\lambda_2z$, $x_i$, $\lambda_i\in \mathbb{R}$, $|x_i\pm \lambda_i|\leq 1$, $i=1$, $2$. Since in general, $\phi_i$, $i=1$, $2$, are linear fractional, this is not the most general case, however it is satisfactory for our purposes.

Clearly, in this case,  $0\leq \deg p ,\deg q \leq 1$. Thus, let $p(z)=p_1z+p_0$, and $q(z)=q_1z+q_0$, where $p_i$, $q_i \in \mathbb{R}$, $i=0$, $1$. A reformulation of
\[
\gamma(t,z)=\frac{\phi_1(z)\phi_2(z)p(z)+(\phi_1(z)+t[S_z])q(z)}{(\phi_2(z)-t[S_z])p(z)+q(z)},
\]
in terms of $z$, gives for $t\in(0,1)$,
\[
\gamma(t,z) = \frac{a_3(t)z^3+a_2(t)z^2+a_1(t)z+a_0(t)}{b_2(t)z^2+b_1(t)z+b_0(t)},
\]
where
\begin{align}
&a_3(t)=\lambda_1\lambda_2p_1,\notag\\
&a_2(t)=(\lambda_2-\lambda_1)q_1t+(x_1\lambda_2+x_2\lambda_1)p_1+\lambda_1q_1+\lambda_1\lambda_2p_0,\notag\\
&a_1(t)=[(x_2-x_1)q_1+(\lambda_2-\lambda_1)q_0]t+\notag \\ &\qquad \qquad +(x_1\lambda_2+x_2\lambda_1)p_0 +x_1x_2p_1+x_1q_1+\lambda_1q_0,\notag \\
&a_0(t)=(x_2-x_1)q_0t+x_1x_2p_0+x_1q_0,\notag
\end{align}
and
\begin{align}
&b_2(t)=-(\lambda_2-\lambda_1)p_1t+\lambda_2p_1,\notag \\
&b_1(t)=-[(x_2-x_1)p_1+(\lambda_2-\lambda_1)p_0]t+p_1x_2+q_1+\lambda_2p_0,\notag\\
&b_0(t)=-(x_2-x_1)p_0t+p_0x_2+q_0.\notag
\end{align}
The function $\gamma$ is linear fractional if and only if $a_2(t)=b_2(t)=a_3(t)=0$, $t\in (0,1)$. In fact it suffice to assume that $a_2(t)=b_2(t)=0$, $t\in (0,1)$, since then we easily get $a_3(t)=0$, $t\in (0,1)$. Then,
\[
\gamma(t,z)=\frac{a_1(t)z+a_0(t)}{b_1(t)z+b_0(t)},
\]
and a careful classification shows that $a_2(t)=b_2(t)=0$, $t\in (0,1)$, corresponds to the integral operators of Table 3. For the operators of the Table 3, the corresponding elements $a_i$, $b_i$, $i=0$, $1$, appear in Table 4.
\begin{table}[h]
\caption{Instances of the operator $\mathcal{I}$ for linear fractional $\gamma$.}
\renewcommand{\arraystretch}{1.75}
 \begin{tabular}{ccccccc}
 \hline \hline
\text{Case} & $\lambda_1$ & $\lambda_2$ & $p(z)$ & $q(z)$ & $\mathcal{I}(f)$\\
\hline
$1$ & $0$ & $0$ & $p_1z+p_0$ & $q_1z+q_0$ &  $\frac{1}{x_2-x_1}\int_{x_1}^{x_2}
\frac{f(\zeta)}{p(z)\zeta+q(z)}\,d\zeta$\\
$2$ & $0$ & $\lambda_2$ & $p_0$ & $q_0$  & $\frac{1}{[S_z]}\int_{x_1}^{x_2+\lambda_2 z}
\frac{f(\zeta)}{p_0\zeta+q_0}\,d\zeta$  \\
$3$ & $\lambda_1$ & $\lambda_2$ & $0$ & $q_0$  & $\frac{1}{q_0[S_z]}\int_{x_1+\lambda_1 z}^{x_2+\lambda_2 z}
f(\zeta)\,d\zeta$ \\
\hline
    \end{tabular}
\end{table}

\begin{table}[h]
\caption{The elements $a_i$, $b_i$, $i=0$, $1$, for linear fractional $\gamma$.}
\renewcommand{\arraystretch}{1.75}
 \begin{tabular}{cccc}
 \hline \hline
Element & Case $1$ & Case $2$ \\
\hline
$a_0(t)$ & $q_0 (x_2-x_1)t+x_1(p_0x_2+q_0)$ & $q_0(x_2-x_1)t+x_1(p_0 x_2+q_0) $  \\
$a_1(t)$ & $q_1(x_2-x_1)t+x_1(p_1 x_2 + q_1 )$ & $\lambda_2 q_0 t + \lambda_2 p_0 x_1$     \\
$b_0(t)$ & $-p_0 (x_2 - x_1)t +p_0 x_2+q_0$ & $ -p_0 (x_2 - x_1)t + p_0 x_2+q_0$  \\
$b_1(t)$ & $- p_1 (x_2-x_1)t+ p_1 x_2+ q_1 $ & $- \lambda_2 p_0 t+\lambda_2 p_0 $    \\
\hline \hline
\raisebox{-3ex}{Case 3} &  $a_0(t) =q_0 (x_2-x_1)t+ q_0 x_1$ & $a_1(t) = (\lambda_2-\lambda_1) q_0 t+\lambda_1 q_0 $  \\
                       & $b_0(t)=q_0$ & $b_1(t)=0$ \\
 \hline
    \end{tabular}
\end{table}

For the operators of the Table 3, we will prove a significant simplification of the Theorem \ref{the: upper bound general}. Before stating the result, we remind that (\cite{HKZ}, Theorem 1.7),
\[
\int_\mathbb{D} \frac{1}{|1-z\overline{w}|^{4-2\alpha}}\,dm(z) \leq  \frac{C}{(1-|w|^2)^{2-2\alpha}},  \quad 0<\alpha<1,
\]
\[
\int_\mathbb{D}\frac{1}{|1-z\overline{w}|^{2}}\,dm(z) \leq  C \log \frac{1}{1-|w|^2},
\]
and
\[
\int_\mathbb{D} \frac{1}{|1-z\overline{w}|^{4-2\alpha}}\,dm(z) \leq  C, \quad 1< \alpha <2,
\]
where $C$ is an appropriate positive constant, not necessary the same at each case.

\begin{proposition}\label{prop : simple linear g}
Let  $\phi_1(z)=x_1+\lambda_1z$, $\phi_2(z)=x_2+\lambda_2z$, $x_i$, $\lambda_i \in \mathbb{R}$, $i=1$, $2$, such that $|x_i\pm \lambda_i|\leq 1$, $p(z)=p_1z+p_0$, $q(z)=q_1z+q_0$, where $p_i$, $q_i \in \mathbb{R}$, $i=0$, $1$ such that $b_2(t)=a_2(t)=0$, and for any $z\in \mathbb{D}$,
\[
\Re \left[\frac{p(z)\phi_2(z)+q(z)}{p(z)\phi_1(z)+q(z)}\right]^{1/2}>0.
\]
Moreover, we assume that for any $t\in (0,1)$,
\begin{itemize}
    \item {$|a_0(t)-a_1(t)|\leq |b_0(t)-b_1(t)|$,}
    \item {$|a_0(t)+a_1(t)|\leq |b_1(t)+b_0(t)|$,}
    \item {$|b_1(t)|<|b_0(t)|$.}
\end{itemize}
Let $\delta(t)=a_1(t)b_0(t)-a_0(t)b_1(t)$, and
\[
A_\alpha(t)=\left\{%
\begin{array}{ll}
    |b_1(t)||b_0(t)|^{-\alpha}(|b_0(t)|-|b_1(t)|)^{\alpha-1}, & \hbox{$0<\alpha<1$,} \\
    |b_1(t)||b_0(t)|^{-1}(-\log (1-|b_1(t)||b_0(t)|^{-1}))^{1/2}, & \hbox{$\alpha=1$.} \\
    1, & \hbox{$1<\alpha<2$,} \\
\end{array}%
\right.
\]
If $\delta^{-\alpha/2}A_\alpha \in L^1([0,1])$, then the operator $\mathcal{I}$ is bounded on $\mathcal{D}_\alpha$ space, $0<\alpha<2$.
\end{proposition}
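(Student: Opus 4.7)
My plan is to invoke Theorem \ref{the: upper bound general} and evaluate its integrand in closed form, using that under the hypotheses $\gamma(t,\cdot)$ is a linear fractional transformation with real coefficients. As a preliminary step, I would verify that condition (\ref{eq : gamma self map}) is met. Since $a_2\equiv b_2\equiv 0$ forces $a_3\equiv 0$, we have
\[
\gamma(t,z)=\frac{a_1(t)z+a_0(t)}{b_1(t)z+b_0(t)}.
\]
The conditions $|a_0\pm a_1|\leq |b_0\pm b_1|$ give $|\gamma(t,\pm 1)|\leq 1$, while $|b_1|<|b_0|$ keeps the pole outside $\overline{\mathbb{D}}$. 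Because an LFT with real coefficients sends $\mathbb{D}$ onto a disc symmetric about $\mathbb{R}$ whose real diameter is the segment $[\gamma(t,-1),\gamma(t,1)]\subseteq[-1,1]$, that disc lies in $\overline{\mathbb{D}}$, so $|\gamma(t,z)|<1$ for $z\in\mathbb{D}$ and (\ref{eq : gamma self map}) holds.

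Next, the vanishing $b_2\equiv 0$ identifies $(\phi_2(z)-t[S_z])p(z)+q(z)$ with $b_1(t)z+b_0(t)$, so
\[
w(t,z)=\frac{1}{b_1(t)z+b_0(t)},\qquad \partial_z w=\frac{-b_1(t)}{(b_1(t)z+b_0(t))^2},\qquad \partial_z\gamma=\frac{\delta(t)}{(b_1(t)z+b_0(t))^2}.
\]
Substituting into Theorem \ref{the: upper bound general} reduces the problem to the pointwise bound $I_1(t)+I_2(t)\leq C|\delta(t)|^{-\alpha}A_\alpha(t)^2$, where
\[
I_1(t)=\frac{|b_1|^2}{|\delta|^\alpha}\int_\mathbb{D}\frac{dm(z)}{|b_1 z+b_0|^{4-2\alpha}},\qquad I_2(t)=\frac{1}{|\delta|^\alpha}\sup_{z\in\mathbb{D}}|b_1 z+b_0|^{2\alpha-2}.
\]
For $I_1$, I would factor $b_1 z+b_0=b_0(1-\overline{w}z)$ with $w=-b_1/b_0\in(-1,1)$, apply the cited \cite{HKZ} estimates, and rewrite the resulting factor $(1-|w|^2)^{2\alpha-2}$ (or the logarithm when $\alpha=1$) using $1-|w|^2=(|b_0|-|b_1|)(|b_0|+|b_1|)/b_0^2$ to expose the $(|b_0|-|b_1|)^{2\alpha-2}$ piece that appears in $A_\alpha$. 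For $I_2$, the extremal values of $|b_1 z+b_0|$ on $\overline{\mathbb{D}}$ are $|b_0|\pm|b_1|$, and the sign of $2\alpha-2$ selects the relevant one. Once the pointwise bound is in hand, taking square roots, integrating against the $L^1$ hypothesis, and applying Theorem \ref{the: upper bound general} yields the desired boundedness.

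The main obstacle is the case-by-case bookkeeping, and in particular the $\alpha=1$ regime, where the Dirichlet-type integral produces $\log(b_0^2/(b_0^2-b_1^2))=\log(|b_0|/(|b_0|-|b_1|))+\log(|b_0|/(|b_0|+|b_1|))$; one must observe that the second summand is non-positive in order to match the $(-\log(1-|b_1|/|b_0|))^{1/2}$ factor that enters $A_1$. A secondary concern is absorbing the residual factors $(|b_0|+|b_1|)^{\pm(2-2\alpha)}$ into the constant $C$. This works because $b_0,b_1$ are affine in $t$ with fixed real coefficients, hence uniformly bounded above (and, away from the simultaneous zeros of $\delta$ already excluded by the $L^1$ assumption, bounded below) on the compact interval $[0,1]$.
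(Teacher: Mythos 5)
Your proposal follows essentially the same route as the paper's proof: establish that $\gamma(t,\cdot)$ is a self map via the real-coefficient linear fractional geometry (pole outside $\overline{\mathbb{D}}$, image disc symmetric about $\mathbb{R}$ with real diameter determined by $\gamma(t,\pm 1)$), compute $w$, $\partial_z w$ and $\partial_z\gamma=\delta(t)/(b_1(t)z+b_0(t))^2$ explicitly, apply the cited \cite{HKZ} estimates case by case in $\alpha$, bound the supremum term by $(|b_0(t)|\pm|b_1(t)|)^{2\alpha-2}$, and integrate in $t$ via Theorem \ref{the: upper bound general}. The only differences are matters of explicitness (you spell out the geometric self-map argument and the absorption of the $(|b_0(t)|+|b_1(t)|)^{\pm(2-2\alpha)}$ and $\alpha=1$ logarithmic factors, which the paper leaves implicit), so the proposal is correct at the same level of rigor as the paper's own argument.
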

\begin{proof}
From Lemma \ref{lem : well_defined_integral} we get that the operator $\mathcal{I}$ is well defined on spaces $\mathcal{D}_\alpha$, $\alpha \in (0,2)$. Since $b_2(t)=a_2(t)=0$, we also get that $a_3(t)=0$, and the function $\gamma$ is linear fractional, thus the operator $\mathcal{I}$ is one of the operators that appear in Table 3. Since $|-b_0(t)/b_1(t)|>1$, $t\in(0,1)$, the function  $\gamma(t,\cdot)$ is an analytic linear fractional map, thus the image of the unit disc is a disc. Moreover, the coefficients of $\gamma(t,\cdot)$ are real, thus $\overline{\gamma(t,z)}=\gamma(t,\overline{z})$, which implies that $\gamma(t,\mathbb{D})$ is symmetrical to the real axis. Since the image of a connected set under a continuous function is a connected set, and by assumption $|\gamma(t,\pm 1)|\leq 1$, we get $\gamma(t,[-1,1])\subset [-1,1]$.
The above remarks imply that the function $\gamma(t,\cdot)$ is a well defined self map of the unit disc for any $t\in (0,1)$. Now, we calculate
\begin{align}
I= \int_\mathbb{D}\frac{|\partial_z w(t,z)|^2}{|\partial_z \gamma(t,z)|^{\alpha}}   \,dm(z)&=\frac{|b_1(t)|^2|b_0(t)|^{2\alpha-4}}{|\delta(t)|^{\alpha}}\int_\mathbb{D} \frac{\,dm(z)}{\left|1+\frac{b_1(t)}{b_0(t)}z\right|^{4-2\alpha}}.\notag
\end{align}
For $0<\alpha<1$, we continue
\begin{align}
I&\leq \frac{C|b_1(t)|^2|b_0(t)|^{2\alpha-4}}{|\delta(t)|^{\alpha}}\left(1-\left|\frac{b_1(t)}{b_0(t)}\right|^2\right)^{2\alpha-2}. \notag
\end{align}
Analogously, for $\alpha=1$, we get
\begin{align}
I&\leq \frac{C|b_1(t)|^2|b_0(t)|^{-2}}{|\delta(t)|}\log \frac{1}{1-|b_1(t)|^2|b_0(t)|^{-2}}, \notag
\end{align}
while for $1<\alpha <2$,
\begin{align}
I&\leq \frac{C|b_1(t)|^2|b_0(t)|^{2\alpha-4}}{|\delta(t)|^{\alpha}} \leq C |\delta(t)|^{-\alpha} . \notag
\end{align}
Finally,
\begin{align}
\sup_{z\in\mathbb{D}}\frac{|w(t,z)|^2}{|\partial_z \gamma(t,z)|^\alpha} &= |\delta(t)|^{-\alpha}\sup_{z\in\mathbb{D}}|b_1(t)z+b_0(t)|^{2\alpha-2} \notag,
\end{align}
and notice that
\begin{align}
\sup_{z\in\mathbb{D}}|b_1(t)z+b_0(t)|^{2\alpha-2} &\leq\left\{%
\begin{array}{ll}
    (|b_0(t)|-|b_1(t)|)^{2(\alpha-1)}, & \hbox{$0< \alpha<1$,} \\
   C, & \hbox{$1\leq\alpha<2$.} \notag
\end{array}%
\right.
\end{align}
From Lemma \ref{lem:lemma1} and the above estimates we get the desired result.

\end{proof}

\begin{table}[h]
\renewcommand{\arraystretch}{1.75}
\caption{Instances of the operator $\mathcal{I}$. }
 \begin{tabular}{cccccc}
 \hline\hline
\text{ Operator } & $a_1(t)$ &$a_0(t)$ & $b_1(t)$ & $b_0(t)$ &$a_1(t)b_0(t)-a_0(t)b_1(t)$  \\
\hline
$\mathcal{C}$ & $t$ &$0$ & $t-1$ & $1$ & $t$  \\
$\mathcal{A}$ & $t$ &$1-t$ & $0$ & $1$ & $t$  \\
$\mathcal{J}$ & $-t-1$ &$t-1$ & $t-1$ & $-t-1$ & $4t$  \\
$\mathcal{H}$ & $0$ & $t$ & $t-1$ & $1$ & $t(1-t)$  \\
${\mathcal{H}}_0$ & $2$ & $4t-2$ & $4t-2$ & $2$ & $16t(1-t)$  \\
\hline
    \end{tabular}
\end{table}

\begin{corollary}\label{corr : specific}
The operators $\mathcal{C}$, $\mathcal{A}$, $\mathcal{J}$, $\mathcal{H}$ and ${\mathcal{H}}_0$ are bounded on weighted Dirichlet spaces
$\mathcal{D}_\alpha$, $0<\alpha<2$.
\end{corollary}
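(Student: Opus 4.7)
The plan is to deduce the corollary as a direct application of Proposition~\ref{prop : simple linear g} to each of the five operators in turn. Table~2 shows that in every case $\gamma(t,\cdot)$ is a linear fractional map, so the hypothesis $a_2(t) = b_2(t) = 0$ holds automatically, and Table~5 records the relevant data $a_0(t), a_1(t), b_0(t), b_1(t), \delta(t)$ entering the proposition.

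First I would dispose of the pointwise hypotheses. The real-part positivity condition is already displayed in Table~2 and can be read off there; for each of the five operators the square root takes values in the right half plane. The three estimates $|a_0(t) \pm a_1(t)| \le |b_0(t) \pm b_1(t)|$ and $|b_1(t)| < |b_0(t)|$ on $(0,1)$ then follow from short algebraic checks using the entries of Table~5. For instance, the $\mathcal{H}_0$ case saturates both inequalities with $|a_0 \pm a_1| = |b_0 \pm b_1|$, while for $\mathcal{C}$ one has $|a_0 + a_1| = |b_0 + b_1| = t$ and $|a_0 - a_1| = t \le 2 - t = |b_0 - b_1|$, and the remaining operators are similar.

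The substantive step is to verify that $\delta(t)^{-\alpha/2} A_\alpha(t) \in L^1([0,1])$ for every $\alpha \in (0,2)$. From Table~5 one reads $\delta(t) \in \{t, 4t, t(1-t), 16t(1-t)\}$, so $\delta^{-\alpha/2}$ has at most endpoint singularities of order $\alpha/2 < 1$. The operator $\mathcal{A}$ is trivial since $b_1 \equiv 0$ forces $A_\alpha = 0$ for $\alpha \le 1$ and $A_\alpha = 1$ for $1 < \alpha < 2$. In the remaining four cases I would split by the three $\alpha$-ranges defining $A_\alpha$: for $0 < \alpha < 1$ the integrand simplifies after cancellation to the shape $C\,t^{\alpha/2 - 1}(1-t)^{\beta}$ with $\beta \ge 1 - \alpha/2 > -1$, hence integrable; for $1 < \alpha < 2$ only $\delta^{-\alpha/2}$ enters, which is integrable because $\alpha/2 < 1$.

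The main obstacle is the borderline case $\alpha = 1$, where $A_1$ carries the logarithmic factor $\bigl(-\log(1 - |b_1(t)|/|b_0(t)|)\bigr)^{1/2}$. For each of the four nontrivial operators, the quantity $1 - |b_1(t)|/|b_0(t)|$ vanishes linearly at the relevant endpoints (at $t=0$ for $\mathcal{C}$ and $\mathcal{J}$, and at both endpoints for $\mathcal{H}$ and $\mathcal{H}_0$), so the logarithm contributes at worst $(\log(1/t))^{1/2}$ growth, which is absorbed by $\delta^{-1/2}$ via the elementary integrability of $t^{-1/2}(\log(1/t))^{1/2}$ near the origin. Collecting these verifications, Proposition~\ref{prop : simple linear g} yields the boundedness of $\mathcal{C}, \mathcal{A}, \mathcal{J}, \mathcal{H}, \mathcal{H}_0$ on $\mathcal{D}_\alpha$ for every $0 < \alpha < 2$.
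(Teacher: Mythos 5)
Your proposal is correct and follows exactly the route the paper intends: the paper's own proof is just a one-line appeal to Table~5 and ``standard technics'' for the integrability of $\delta^{-\alpha/2}A_\alpha$, and you have supplied precisely those verifications (the only slip is that for $\mathcal{H}$ the quantity $1-|b_1(t)|/|b_0(t)|=t$ vanishes only at $t=0$, not at both endpoints, which is harmless since you are overestimating the singularity). Nothing further is needed.
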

\begin{proof}
It is easy to verify that the operators under question are well defined on weighted Dirichlet spaces $\mathcal{D}_\alpha$, $0<\alpha<2$, (Table 2). Some additional necessary calculations are presented in Table 5. Using standard technics one may show that the function $\delta^{-\alpha/2} A_\alpha$ as defined in Proposition \ref{prop : simple linear g}, is integrable over $(0,1)$, for all the above cases. The result comes as a corollary of Proposition \ref{prop : simple linear g}.
\end{proof}
\textbf{Remark} : In \cite{Dia2}, we proved that $\|\mathcal{I}\|_{H^2 \to H^2} \leq \int_0^1 \delta^{-1/2}(t) \,dt$, for the operator $\mathcal{I}$ that corresponds to a linear fractional $\gamma$, with the additional assumption that $\gamma(t,1)=1$. Moreover, this estimate proved to be sharp for the operators $\mathcal{C}$, $\mathcal{A}$ (\cite{Sis2}), ${\mathcal{H}}$ (\cite{DJV}), and ${\mathcal{H}}_0$ (\cite{Dia2}). This is a better estimate than the one that we provide in Proposition \ref{prop : simple linear g} for the space $\mathcal{D}_0=H^2$.  It seems that it is not possible to attain this upper norm bound using the method of this article, although the estimate that we prove in Proposition \ref{prop : simple linear g} is sufficient for our purpose to prove boundedness of that operators on weighted Dirichlet spaces.

\section{Generating the operator $\mathcal{I}$.}

Motivated by the assumption (\ref{assumption1}) we can provide a procedure that generates instances of the operator $\mathcal{I}$ of arbitrary complexity. Thus, let $p$ be an analytic function and $\omega$ a self map of the unit disc. We define
\begin{equation}\label{def:q}
q(z) = \left([S_z]\frac{(1-\omega(z))^2}{4\omega(z)}-\phi_1(z)\right)p(z),
\end{equation}
and we calculate
\[
\Re\left[\frac{p(z)\phi_2(z)+q(z)}{p(z)\phi_1(z)+q(z)}\right]^{1/2}= \Re \left(\frac{1+\omega(z)}{1-\omega(z)}\right) >0,
\]
thus, from Lemma \ref{lem : well_defined_integral} the operator $\mathcal{I}$ is well defined on $\mathcal{D}_{\alpha}$, $0<\alpha<2$. Now, a calculation shows that
\[
\gamma(t,z)=\phi_1(z) + [S_z]\psi(t,z),\quad \text{ where } \quad \psi(t,z)=\frac{t(1-\omega(z))^2}{1+(2-4 t) \omega(z)+\omega(z)^2}.
\]
Notice that the function $\gamma$ is independent of the choice of $p$. For an appropriate choice of $\omega$ such that $\gamma(t,\mathbb{D})\subset \mathbb{D}$, $t\in (0,1)$, we may generate operators of arbitrary complexity, able of being represented as integrals of a weighted composition operator.

In Table 6 we demonstrate 6 choices of $\phi_1$, $\phi_2$, $p$ and $\omega$ along with the corresponding function $q$ that was computed from equation (\ref{def:q}). In Table 7, the corresponding map $\gamma$ appears along with the operator $\mathcal{I}$ that generated in each case. We verified that $\gamma$ is indeed a self map of the unit disc for every $t\in (0,1)$ using a computer software. Notice that at Case 5, the function $q$ is not an analytic function, while at Case 6 the function $\phi_2$ is not linear.

\begin{table}[h]
\renewcommand{\arraystretch}{1.75}
\caption{Instances of the operator $\mathcal{I}$ where $\gamma$ is not linear fractional.}
 \begin{tabular}{cccccccc}
 \hline\hline
\text{Case} & $\phi_1(z)$ & $\phi_2(z)$ & $p(z)$ & $\omega(z)$  & $q(z)$ \\
\hline
1 & $-1$ & $1$ & $2z$ & $\frac{z}{3}$ & $3 + 2 z - \frac{z^2}{3}$ \\
2 &  $-1$ & $1$ & $e^z$ & $\frac{z}{3}$ &  $\frac{-e^z (-9 - 6 z + z^2)}{6 z}$\\
3 & $0$ & $z$ & $z^3+z^2-z+1$ & $\frac{z}{3}$ & $-\frac{(z^2-9) (1 - z + z^2 + z^3)}{12} $ \\
4 & $0$ & $z$ & $z^3-z+2$ & $\frac{z^3}{3}$ & $\frac{3 \left(2-z+z^3\right) \left(1-\frac{z^6}{9}\right)}{4 z^2}$\\
5 & $0$ & $z$ & $z$ & $\frac{1}{3}\frac{z-1/2}{1/2z-1}$ & $\frac{z^2 (35 - 32 z + 5 z^2)}{6 (-2 + z) (-2 + 4 z)}$\\
6 & $0$ & $\frac{2z - 1}{z - 2}$ & $z$ & $\frac{z}{4}$ & $-\frac{(-1+2 z) \left(-16+z^2\right)}{16 (-2+z)}$\\
\hline
    \end{tabular}
\end{table}

\begin{table}[h]
\renewcommand{\arraystretch}{1.75}
\caption{Instances of the operator $\mathcal{I}$ where $\gamma$ is not linear fractional.}
 \begin{tabular}{ccc}
 \hline\hline
\text{Case} & $\gamma(t,z)$ & $\mathcal{I}(f)$ \\
\hline
1 & $\frac{-(3 + z)^2 + 2 t (9 + z^2)}{9 + (6 - 12 t) z + z^2}$ & $\frac{3}{2} \int_{-1}^1 \frac{f(\zeta )}{9-z^2+6z(1+\zeta )} \, d\zeta$ \\
2 & $\frac{-(3 + z)^2 + 2 t (9 + z^2)}{9 + (6 - 12 t) z + z^2}$ & $\frac{1}{2} \int_{-1}^1 \frac{6 e^{-z} z f(\zeta)}{9-z^2+6 z (1+\zeta )} \, d\zeta$ \\
3 & $\frac{t (-3 + z)^2}{9 + (6 - 12 t) z + z^2}$ & $\int_0^z \frac{-12 f(\zeta)}{z(1-z+z^2+z^3) \left(-9+z^2-12 \zeta \right)} \, d\zeta$\\
4 & $\frac{ t z (3-z^3)^2}{9+3 (2-4 t) z^3+z^6}$ & $\int_0^z \frac{-12 z^2 f(\zeta)}{z(2-z+z^3) \left(-9+z^6-12 z^2 \zeta \right)} \, d\zeta $ \\
5 & $\frac{t (-5+z)^2 z}{(7-5 z)^2-12 t \left(2-5 z+2 z^2\right)}$ & $\int_0^z \frac{12 \left(2-5 z+2 z^2\right) f(\zeta)}{5 z^5+z^3 (35-60 \zeta )+24 \zeta^3 +8 z^4 (-4+3 \zeta )} \, d\zeta $ \\
6 & $-\frac{t (-4+z)^2 (-1+2 z)}{(-2+z) \left(-16-8 z+16 t z-z^2\right)}$ & $\frac{z-2}{2z-1} \int_0^{\frac{2z-1}{z-2}} \frac{f(\zeta)16 (z-2)}{ (1-2z)(z^2-16)+16z \zeta (z-2) } \, d\zeta $ \\
\hline
    \end{tabular}
\end{table}

\noindent \textbf{Final Remarks } It would be desirable to find a convenient sufficient condition that will imply that the function $\gamma$ is a self map of the unit disc for all $t \in (0,1)$. Towards this direction, we state the following Proposition that provides an answer, which is rather complicated and non practical, though the only one available at the moment. First, let
\begin{align}
&R_1(z)=[S_z]q(z), \quad R_2(z)=\phi_1(z)[p(z)\phi_2(z) +q(z)], \notag \\
&R_3(z)=-[S_z]p(z), \quad R_4(z)=p(z)\phi_2(z) +q(z).\notag
\end{align}
We prove
\begin{proposition}
Let
\begin{align}
&a_z = \left|
  \begin{array}{ccc}
   \frac{R_1(z)}{R_3(z)} & \Im  \frac{R_1(z)}{R_3(z)} & 1 \\
   \phi_1(z) &  \Im \phi_1(z)  & 1 \\
   \phi_2(z) & \Im \phi_2(z)  & 1 \\
  \end{array}
\right|, \quad d_z =  \left|
  \begin{array}{ccc}
  \left| \frac{R_1(z)}{R_3(z)}\right|^2 & -i \frac{R_1(z)}{R_3(z)} & 1 \\
    |\phi_1(z)|^2 & -i\phi_1(z)  & 1 \\
    |\phi_2(z)|^2 & -i\phi_2(z)  & 1 \\
  \end{array}
\right| \notag \\
&f_z=- \left|
  \begin{array}{ccc}
    \left| \frac{R_1(z)}{R_3(z)}\right|^2 & \Re  \frac{R_1(z)}{R_3(z)} & \Im   \frac{R_1(z)}{R_3(z)}\\
    |\phi_1(z)|^2 &  \Re \phi_1(z)  & \Im \phi_1(z)  \\
    |\phi_2(z)|^2 & \Re \phi_2(z) &  \Im \phi_2(z)  \\
  \end{array}
\right|, \notag
\end{align}
and
\[
K_z = \frac{d_z}{2a_z}, \quad r_z = \sqrt{\frac{d_z^2}{4a_z^2}-\frac{f_z}{a_z}},
\]
Then, $\gamma:(0,1)\times \mathbb{D} \to \mathbb{C}$, is a well defined self map of the unit disc if and only if
\begin{align}
\{ w \in \mathbb{C} : \arg{w} \in (\arg\gamma(0,z), \arg\gamma(1,z),\quad \exists z\in \mathbb{D},\quad |w - K_z|= r_z \}\subseteq \mathbb{D}.\notag
\end{align}
\end{proposition}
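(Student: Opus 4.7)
The plan is to recast the self-map condition on $\gamma$ as a geometric condition on a family of circular arcs, exploiting the observation that for each fixed $z \in \mathbb{D}$, the map $t \mapsto \gamma(t,z)$ is a Möbius transformation in the variable $t$. First, I would rewrite $\gamma$ using the $R_i$'s. A direct algebraic manipulation of the defining formula of $\gamma$ yields
\[
\gamma(t,z) = \frac{R_2(z) + t R_1(z)}{R_4(z) + t R_3(z)},
\]
so $\gamma(0,z) = \phi_1(z)$, $\gamma(1,z) = \phi_2(z)$, and $\lim_{t\to\infty}\gamma(t,z) = R_1(z)/R_3(z) = -q(z)/p(z)$.

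Since Möbius transformations carry lines and circles to lines and circles, the image of $\mathbb{R}\cup\{\infty\}$ under $t\mapsto \gamma(t,z)$ is the unique circle (or line) through the three points $\phi_1(z)$, $\phi_2(z)$, $R_1(z)/R_3(z)$. Assuming non-degeneracy, the equation of this circle can be written as the vanishing of the classical $4\times 4$ determinant with columns $|w|^2, w, \bar w, 1$ and the three data points. Expanding this determinant along the $w$-column and splitting into real and imaginary parts produces an equation of the form $a_z|w|^2 - 2\Re(\bar K_z\, w) \cdot a_z + f_z = 0$, where the matrix entries are precisely the $a_z, d_z, f_z$ appearing in the statement. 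Completing the square then gives center $K_z = d_z/(2a_z)$ and radius $r_z = \sqrt{d_z^2/(4a_z^2) - f_z/a_z}$, as claimed.

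Next, because $t\mapsto \gamma(t,z)$ is a continuous parametrization and the full circle is the image of $\mathbb{R}\cup\{\infty\}$, the image of the open interval $(0,1)$ is exactly the open sub-arc joining $\phi_1(z)$ and $\phi_2(z)$ that avoids $R_1(z)/R_3(z)$ (the image of $t=\infty$). This sub-arc is parametrized by $\arg w$ lying between $\arg\gamma(0,z)$ and $\arg\gamma(1,z)$, in the sense of the stated set. Hence $\gamma(t,z)\in\mathbb{D}$ for all $(t,z)\in(0,1)\times\mathbb{D}$ if and only if the union over $z\in\mathbb{D}$ of these arcs lies in $\mathbb{D}$, which is precisely the condition in the Proposition. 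Both directions of the equivalence follow immediately from this identification.

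The main obstacle is the bookkeeping in the middle paragraph: one must carefully verify that the specific real/imaginary decomposition of the $4\times 4$ circle-determinant reproduces exactly the three determinants $a_z$, $d_z$, $f_z$ as written (including the correct placement of $\Re$, $\Im$, and the factors of $-i$), and one must handle the edge case where the three prescribed points are collinear (so that the ``circle'' is a line and $a_z=0$) as a limiting or degenerate subcase. A secondary subtlety is checking that the argument-from-origin characterization of the arc is unambiguous when the circle $|w-K_z|=r_z$ does not encircle $0$, which is automatic under the standing hypothesis (\ref{assumption1}) ensuring that $\gamma(t,z)\neq 0$ along the arc.
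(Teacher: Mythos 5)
Your proposal follows essentially the same route as the paper: you observe that $\gamma$ is a linear fractional transformation of $t$ with the stated coefficients $R_i(z)$, so that $\gamma((0,1),z)$ is the arc of the circle through $\gamma(0,z)=\phi_1(z)$, $\gamma(1,z)=\phi_2(z)$ and $\gamma(\infty,z)=R_1(z)/R_3(z)$, and you identify $a_z$, $d_z$, $f_z$, $K_z$, $r_z$ with the standard three-point circle determinants, exactly as the paper does (the paper simply defers this bookkeeping to a standard reference). Your version is in fact more explicit than the paper's, and the subtleties you flag (degenerate collinear case, ambiguity of the argument characterization of the arc) are real but are equally unaddressed in the original.
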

\begin{proof}
Notice that the function $\gamma$ is a linear fractional transformation of the variable $t$, that is
\[
\gamma(t,z) = \frac{R_1(z)t+R_2(z)}{R_3(z)t+R_4(z)}.
\]
For any $z\in \mathbb{D}$, the curve $\gamma([0,1],z)$ is a circular arc, laying on the circle that connect the points $\gamma(0,z)$, $\gamma(1,z)$ and $\gamma(\infty,z)$. The transformation $\gamma$ is a well defined self map of the unit disc if and only if for any $z\in \mathbb{D}$, $\gamma([0,1],z) \subset \mathbb{D}$. From any standard text on linear fractional maps (or try the web : [We]) one may verify that the present Lemma is the appropriate analytical formulation of the latter geometrical statement.
\end{proof}
Supplementary to the above, we would like to emphasize the remarkable pattern of the coefficients of the nominator and the denominator of $\gamma$ when expressed as polynomials of $z$. In particular, for the case of simple linear $\phi_i(z)=x_i+\lambda_i z$, $i=1$, $2$, let $p(z)=\sum_{n=0}^N p_n z^n$, and $q(z)=\sum_{n=0}^N q_n z^n$, be complex polynomials. The nominator of $\gamma$ is a polynomial of degree $N+2$ while the denominator is a polynomial of degree $N+1$. A reformulation of $\gamma(t,z)$ in terms of $z$ reveals that
\[
\gamma(t,z) = \frac{\sum_{n=0}^{N+2} a_n(t) z^n}{\sum_{n=0}^{N+1} b_n(t) z^n},
\]
where,
\begin{align}
[a(t)]
=(&(x_2-x_1)[q] +(\lambda_2-\lambda_1)[q]^{'})t+ \notag \\ &+x_1(x_2 [p]+[q])+(x_1\lambda_2+x_2\lambda_1)[p]^{'}+\lambda_1[q]^{'}+\lambda_1\lambda_2 [p]^{''}, \notag
\end{align}
and
\begin{equation}
[b(t)]
=-((x_2-x_1)[p] +(\lambda_2-\lambda_1)[p]^{'})t+x_2[p] +[q]+ \lambda_2 [p]^{'}, \notag
\end{equation}
where
\begin{align}
&[a(t)] = [a_0(t),a_1(t), \ldots, a_{N+1}(t), a_{N+2}(t)], \notag \\
&[b(t)] = [b_0(t),b_1(t), \ldots, b_{N+1}(t), 0], \notag \\
&[p] = [p_0, p_1, \ldots, p_n, 0,0], \notag \\
&[p]^{'} = [0,p_0, p_1, \ldots, p_n,0], \notag \\
&[p]^{''} = [0,0,p_0, p_1, \ldots, p_n]. \notag
\end{align}
Finally, notice that the last calculation clearly extends for $N \to \infty$, that is, for any analytic functions $p$ and $q$ in the unit disc. The reader may also verify that the special case of the above calculations for linear $p$ and $q$ was presented at Section \ref{section : linear fractional}.

\end{document}